\documentclass[portuges,12pt,letter]{article}
\usepackage[centertags]{amsmath}
\usepackage{amsfonts}
\usepackage{newlfont}
\usepackage{amscd}
\usepackage{graphics}
\usepackage{epsfig}
\usepackage{indentfirst}
\usepackage{amsxtra}
\usepackage[latin1]{inputenc}
\usepackage{amssymb, amsmath}
\usepackage{amsthm}
\usepackage[mathscr]{eucal}

\newtheorem{thm}{Theorem}[section]

\newtheorem{obe}[thm]{Remark}

\setlength{\textwidth}{18cm} \setlength{\textheight}{22cm}
\setlength{\topmargin}{-2cm} \setlength{\oddsidemargin}{-1cm}

\title{\bf{A duality principle for a semi-linear model in micro-magnetism}}


\author{Fabio Silva Botelho }




\begin{document}
\maketitle

\begin{abstract}
This article develops a duality principle for a semi-linear model in micro-magnetism. The results are obtained through standard tools of convex analysis and the
Legendre transform concept.
We emphasize the dual variational formulation presented is concave and suitable for numerical computations. Moreover, sufficient optimality conditions are also established. \\
\end{abstract}

\section{Introduction}
This article develops a dual variational formulation for a semi-linear
model in micro-magnetism. For the primal formulation we refer to
references \cite{120,360} for details. In particular we refer to the original results presented in \cite{360}, emphasizing  the present work is their natural continuation and extension.  We also highlight the present work  develops real relevant improvements relating the previous similar results in \cite{120}.

At this point we start to describe the primal formulation.

Let $\Omega \subset \mathbb{R}^3$ be an open bounded set with a
 a regular (lipschitzian) boundary denoted by
$\partial \Omega$. By a regular lipschitzian  boundary $\partial \Omega$ we mean regularity enough so that the  Sobolev  imbedding theorem and relating results, the trace theorem and the standard Gauss-Green formulas of integration by parts to hold. The corresponding outward normal to $\partial \Omega$ is denoted by $\textbf{n}=(n_1,n_2,n_3)$. Also, we denote by $\mathbf{0}$ either the zero vector in $\mathbb{R}^3$ or the zero in an appropriate function space.

Under such assumptions and notations, consider problem of finding
the magnetization $m:\Omega \rightarrow \mathbb{R}^3$, which minimizes the functional
\begin{eqnarray}
J(m,f)&=&\frac{\alpha}{2}\int_\Omega |\nabla m|^2_2\;dx+\int_\Omega
\varphi(m(x))\;dx-\int_\Omega
H(x)\cdot m \;dx \nonumber \\ &&+\frac{1}{2}\int_{\mathbb{R}^3} |f(x)|^2_2\;dx,
\end{eqnarray}
where
\begin{equation}
m=(m_1,m_2,m_3) \in W^{1,2}(\Omega;\mathbb{R}^3)\equiv Y_1, \; |m(x)|_2 =1, \text{ in } \Omega
\end{equation}
and  $f \in L^2(\mathbb{R}^3;\mathbb{R}^3)\equiv Y_2$ is the
unique field determined by the simplified Maxwell's equations
\begin{equation}
curl (f)=\mathbf{0},\; \; div(-f+m \chi_{\Omega})=0, \text{ in }
\mathbb{R}^3.
\end{equation}
Here $H \in L^2(\Omega;\mathbb{R}^3)$ is a known external field
and $\chi_\Omega$ is a function defined by
 \begin{equation}\label{95}
\chi_\Omega(x)=\left \{
\begin{array}{ll}
1, &\text{ if }\; x \in \Omega,
 \\
 0, & \text{ otherwise.} \end{array} \right.\end{equation}

 The term $$\frac{\alpha}{2}\int_\Omega |\nabla m|^2_2\;dx$$ is called the
exchange energy, where $$|m|_2=\sqrt{\sum_{k=1}^3 m_k^2}$$ and $$|\nabla m|_2^2=\sum_{k=1}^3 |\nabla m_k|^2_2.$$ Finally, $\varphi (m)$ represents the anisotropic
contribution and is given by a multi-well functional whose minima
establish the preferred directions of magnetization.
\begin{obe} Here some brief comments on the references. Relating and similar problems are addressed in \cite{120}. The basic results on convex and variational analysis used in this text may be found in \cite{6,120,29,12}. About the duality principles, we have been greatly inspired and influenced by the work of J.J. Telega and W.R. Bielski. In particular, we would refer to \cite{85}, published in 1985, as the first article to  successfully apply the convex analysis approach to non-convex and non-linear mechanics.

Finally, an extensive study on Sobolev spaces may be found in \cite{1}.
\end{obe}
\begin{obe} At some points of our analysis we refer to the problems in question after discretization. In such a case we referring to their approximations in a
finite element or finite differences context.
\end{obe}

\section{The duality principle for the semi-linear model}
 We consider first the case of a uniaxial material where
$\varphi(m)=\beta (1-|m\cdot e|)$.

Observe that $$ \varphi(m)=\min\{\beta(1+m \cdot e),\beta(1-m \cdot e)\} $$
where $\beta >0$ and $e \in \mathbb{R}^3$ is a unit vector.

The main duality principle is summarized by the following theorem.

\begin{thm} Considering the previous statements and notations, define $J:Y_1 \times Y_2 \times B \rightarrow \overline{R}=\mathbb{R} \cup \{+\infty\}$ by
\begin{eqnarray}
J(m,f,t)&=& G_0(m)-\frac{K}{2}\langle m_i,m_i \rangle_{L^2}+G_1(m,t)+G_2(f)
\nonumber \\ &&+Ind_0(m)+Ind_1(m,f)+Ind_2(f),
\end{eqnarray}
where,
$$G_0(m)= \frac{\alpha}{2} \langle \nabla m_i, \nabla m_i \rangle_{L^2},$$
$$G_1(m,t)= \int_\Omega (t g_1(m)+(1-t) g_2(m))\;dx-\langle H_i,m_i \rangle_{L^2}+\frac{K}{2} \langle m_i,m_i \rangle_{L^2},$$
$$G_2(f)=\frac{1}{2}\int_{\mathbb{R}^3} |f(x)|^2\;dx,$$
$$g_1(m)=\beta (1+m \cdot e),$$
$$g_2(m)=\beta (1-m \cdot e),$$
\begin{equation}Ind_0(m)=\left\{
\begin{array}{lr}
0,& \text{ if } |m(x)|_2=1, \text{ in } \Omega\\
+\infty,& \text{ otherwise,} \end{array}\right.\end{equation}
\begin{equation}Ind_1(m,f)=\left\{
\begin{array}{lr}
0,& \text{ if } div (-f+m \chi_{\Omega})=0, \text{ in } \mathbb{R}^3\\
+\infty,& \text{ otherwise,} \end{array}\right.\end{equation}
\begin{equation}Ind_2(f)=\left\{
\begin{array}{lr}
0,& \text{ if } curl f= \mathbf{0}, \text{ in } \mathbb{R}^3\\
+\infty,&  \text{ otherwise.} \end{array}\right.\end{equation}

We recall the present case refers to a uniaxial material with exchange of energy, that is $\alpha>0$.

Here, $e=(e_1,e_2,e_3) \in \mathbb{R}^3$ is a unit vector.

Under such hypotheses, we have,
$$ \inf_{(m,f,t) \in Y_1 \times Y_2 \times B}\{J(m,f,t)\} \geq \sup_{ \lambda \in A^*}\{\tilde{J}^*(\lambda)\},$$
where
$$\tilde{J}^*(\lambda)=\inf_{ (z^*,t) \in Y^*_4(\lambda) \times B} J^*(\lambda,z^*,t),$$
$$J^*(\lambda,z^*)= \tilde{F}^*(z^*)-\tilde{G}^*(\lambda,z^*,t),$$
and where, for the discretized problem version,
\begin{eqnarray}
\tilde{F}^*(z^*)= \sup_{ m \in Y_1} \left\{ \langle z_i^*, \nabla m_i \rangle_{L^2}+G_0(m)-\frac{K}{2} \langle m_i, m_i \rangle_{L^2}\right\}.
\end{eqnarray}

Here $K>0$ is such that
$$-G_0(m)+\frac{K}{2} \langle m_i,m_i \rangle_{L^2}>0,\; \forall m \in Y_1,\; m \neq \mathbf{0}.$$
\begin{eqnarray}
\tilde{G}^*(\lambda,z^*,t)&=& G_1^*(\lambda,z^*,t)+G_2^*(\lambda)
\nonumber \\ &=&
\sup_{(m,f) \in Y_1 \times Y_2}\{ \langle z_i^*, \nabla m_i \rangle_{L^2}
+ \langle \lambda_2, div(m \chi_\Omega)-f) \rangle_{L^2} \nonumber \\
&& +\langle \lambda_1, curl f \rangle_{L^2} \nonumber \\ &&
-G_1(m,t)-G_0(f)\}
\end{eqnarray}
where, more specifically,
\begin{eqnarray}G_1^*(\lambda,z^*,t)&=&\frac{1}{2} \int_\Omega \frac{ \sum_{i=1}^3 \left(-\frac{\partial \lambda_2}{\partial x_i}+H_i+\beta(1-2t)e_i-div\; z_i^*\right)^2}
{\lambda_3+K}\;dx \nonumber \\ && -\frac{1}{2} \int_\Omega \lambda_3\;dx+\frac{1}{2} \int_\Omega \beta \;dx,\end{eqnarray}

$$
G_2^*(\lambda)=\frac{1}{2}\|\nabla \lambda_2-curl^* \lambda_1\|_2^2.$$

Also,
$$A_1=\{ \lambda \in Y_3\;:\; \lambda_3+K>0, \text{ in } \Omega\},$$
and from the standard second order sufficient optimality condition for a local minimum in $m$, we define
\begin{eqnarray}A_2&=&\{\lambda \in  Y_3\;:\; \nonumber \\ && G_0(m)+\langle \lambda_3, \sum_{i=1}^3 m_i^2 \rangle_{L^2}>0, \nonumber \\ &&
\forall m \in Y_1,\; \text{ such that } m \neq \mathbf{0}\}\end{eqnarray}
where
$$A^*=A_1 \cap A_2,$$
$$Y=Y^*=L^2(\Omega; \mathbb{R}^3)= L^2,$$
$$\lambda=(\lambda_1,\lambda_2,\lambda_3) \in Y_3=W^{1,2}(\mathbb{R}^3;\mathbb{R}^3) \times W^{1,2}(\mathbb{R}^3) \times L^2(\Omega),$$
$$Y_1=W^{1,2}(\Omega;\mathbb{R}^3),$$
$$Y_2=W^{1,2}(\mathbb{R}^3; \mathbb{R}^3),$$
$$Y_4^*(\lambda)=\{z^* \in [Y^*]^3\;:\; z^*_i \cdot \mathbf{n}+\lambda_2 n_i=0, \text{ on } \partial \Omega, \forall i \in \{1,2,3\}\},$$
$$B=\{ t \text{ measurable }\;:\; 0 \leq t \leq 1, \text{ in } \Omega\}.$$

Finally, suppose there exists $(\lambda_0,z_0^*,t_0) \in A^* \times Y^*_4(\lambda_0) \times B$ such that for an appropriate $\lambda_4 \in L^2(\Omega)$ we have
$$\delta \left[J^*(\lambda_0,z_0^*,t_0) +\langle \lambda_4, t_0^2-t_0 \rangle_{L^2}\right]=\mathbf{0},$$
$$\delta^2_{z^*z^*} J^*(\lambda_0,z_0^*,t_0) >\mathbf{0}$$ and  for the concerning Hessian
$$\det\left\{\delta_{z^*,t}^2\left[J^*(\lambda_0,z_0^*,t_0)+\langle \lambda_4, t_0^2-t_0 \rangle_{L^2}\right]\right\}> 0, \text{ in } \Omega.$$

Under such hypotheses,
\begin{eqnarray}
J(m_0,f_0,t_0) &=&\inf_{(m,f,t) \in Y \times Y_1 \times B} J(m,f,t)
\nonumber \\ &=& \sup_{\lambda \in A^* } \tilde{J}^*(\lambda) \nonumber \\ &=& \tilde{J}^*(\lambda_0)
\nonumber \\ &=& J^*(\lambda_0,z_0^*,t_0).
\end{eqnarray}
\end{thm}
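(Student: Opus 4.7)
The strategy is to exhibit the primal functional $J$ as a sum involving one non-convex (concave) piece and several convex pieces, introduce Lagrange multipliers for each constraint and a Legendre variable for $\nabla m$, and then apply the min--max inequality to obtain weak duality, closing the gap under the stated optimality hypotheses. First I would use the elementary identity
$$\varphi(m) = \min\{g_1(m),g_2(m)\} = \inf_{t \in \{0,1\}}\bigl[t\, g_1(m) + (1-t)\, g_2(m)\bigr],$$
and observe that, because $g_1$ and $g_2$ are affine in $m$, this infimum coincides with the relaxed one over $t \in B$, so $\inf_{(m,f)} J(m,f) = \inf_{(m,f,t) \in Y_1 \times Y_2 \times B} J(m,f,t)$. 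The $\pm (K/2)\langle m_i, m_i\rangle_{L^2}$ splitting is then chosen so that $G_1(m,t)$ is strictly convex in $m$ pointwise (for each $t$), while the concave residue $G_0(m) - (K/2)\langle m_i, m_i\rangle_{L^2}$ is isolated inside $\tilde F^*$; the unit-norm constraint is carried by $\lambda_3 \in L^2(\Omega)$.

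Next I would dualize every constraint by a Lagrange multiplier: $Ind_0(m)$ by $\sup_{\lambda_3}\langle \lambda_3, \sum_i m_i^2 - 1\rangle_{L^2}$, $Ind_1(m,f)$ by $\sup_{\lambda_2}\langle \lambda_2, div(-f+m\chi_\Omega)\rangle_{L^2}$ after an integration by parts, which is precisely where the boundary condition $z_i^* \cdot \mathbf{n} + \lambda_2 n_i = 0$ defining $Y_4^*(\lambda)$ enters, and $Ind_2(f)$ by $\sup_{\lambda_1}\langle \lambda_1, curl f\rangle_{L^2} = \langle curl^* \lambda_1, f\rangle_{L^2}$. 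Simultaneously, a dual variable $z^* \in [Y^*]^3$ is paired with $\nabla m$, so the non-convex residue is absorbed into $\tilde F^*(z^*)$ exactly as stated. The min--max (Fenchel--Young) inequality applied to the resulting saddle form then yields the weak duality $\inf J \geq \sup \tilde J^*$.

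The explicit formulas for $G_1^*$ and $G_2^*$ are obtained by carrying out the inner suprema in $\tilde G^*$. The quadratic in $m_i$ is strictly concave (hence its supremum is finite) exactly when $\lambda_3 + K > 0$, i.e.\ $\lambda \in A_1$, and completing the square yields the quotient formula with denominator $\lambda_3 + K$ and linear numerator $-\partial \lambda_2/\partial x_i + H_i + \beta(1-2t)e_i - div\, z_i^*$, together with the $-(1/2)\int_\Omega \lambda_3$ term from the constant part of $Ind_0$ and the $(1/2)\int_\Omega \beta$ from $tg_1 + (1-t)g_2$. The supremum in $f$ of $\langle \nabla \lambda_2 - curl^* \lambda_1, f\rangle_{L^2} - (1/2)\|f\|^2$ gives $G_2^*(\lambda) = (1/2)\|\nabla \lambda_2 - curl^* \lambda_1\|_2^2$ by a direct Legendre computation on $\mathbb{R}^3$.

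Finally, to upgrade weak to strong duality at $(\lambda_0, z_0^*, t_0)$, I would read the stationarity condition $\delta[J^*(\lambda_0,z_0^*,t_0) + \langle \lambda_4, t_0^2 - t_0\rangle_{L^2}] = \mathbf{0}$ backwards through the Legendre transforms to reconstruct a primal triple $(m_0, f_0, t_0)$ at which each Fenchel--Young inequality used in the dualization step becomes an equality; the multiplier $\lambda_4$ enforces the discrete selection $t_0 \in \{0,1\}$ attained a posteriori. The set $A_2$ is precisely the second-order sufficient condition that makes the reconstructed $m_0$ a genuine local minimizer of the primal in $m$, while the hypotheses $\delta^2_{z^*z^*} J^* > 0$ and positivity of the mixed Hessian determinant in $(z^*,t)$ certify that the dual inner infimum in $(z^*,t)$ is attained and non-degenerate. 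The main obstacle I anticipate is this last step: showing that the pointwise/local saddle identities lift to the global equality $J(m_0,f_0,t_0) = J^*(\lambda_0,z_0^*,t_0)$ despite the non-convex $-(K/2)\langle m_i,m_i\rangle_{L^2}$ contribution; this is where the $A^* = A_1 \cap A_2$ hypotheses, combined with a Toland/Telega--Bielski type DC-duality argument, rule out a hidden duality gap and yield the full chain of equalities.
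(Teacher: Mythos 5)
Your proposal follows essentially the same route as the paper's own proof: weak duality obtained directly from the supremum definitions of $\tilde{F}^*$ and $\tilde{G}^*$ (a Fenchel--Young/min--max argument with the same $K$-shift DC splitting and the same multipliers $\lambda_1,\lambda_2,\lambda_3$), followed by using the stationarity conditions to reconstruct $(m_0,f_0,t_0)$, turning each Fenchel--Young inequality into an equality so that $J^*(\lambda_0,z_0^*,t_0)=J(m_0,f_0,t_0)$, and invoking the second-order hypotheses on the quadratic inner problem in $(z^*,t)$ to get $\tilde{J}^*(\lambda_0)=J^*(\lambda_0,z_0^*,t_0)$ and squeeze the full chain of equalities. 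The "obstacle" you flag at the end is resolved in the paper exactly by this squeeze between the weak duality inequality and the reconstructed equality, which you already have in hand.
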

\begin{proof}
Observe that
\begin{eqnarray} &&G_1^*(\lambda,z^*,t)+G_2^*(\lambda) \nonumber \\ &\geq&
\langle z_i^*,  \nabla m_i \rangle_{L^2}
+ \langle \lambda_2, div(m \chi_\Omega)-f) \rangle_{L^2} \nonumber \\
&& +\langle \lambda_1, curl f \rangle_{L^2} \nonumber \\ &&+\left\langle \lambda_3, \sum_{i=1}^3 m_i^2-1 \right\rangle_{L^2}
-G_1(m,t)-G_2(f) \nonumber \\ &\geq& \langle z_i^*, \nabla  m_i \rangle_{L^2}
-G_1(m,t)-G_2(f)-Ind_0(m)-Ind_1(m,f)-Ind_2(f),\end{eqnarray}
$\forall (m,f,t) \in Y_1 \times Y_2 \times B, z^* \in Y^*_4(\lambda),\; \lambda \in A^*$
so that,
\begin{eqnarray} &&-\tilde{F}^*(z^*)+G_1^*(\lambda,z^*,t)+G_2^*(\lambda) \nonumber \\
 &\geq&-\tilde{F}^*(z^*)+ \langle z_i^*, \nabla   m_i \rangle_{L^2}
-G_1(m,t)-G_2(f)\nonumber \\ &&-Ind_0(m)-Ind_1(m,f)-Ind_2(f),\end{eqnarray}
$\forall (m,f,t) \in Y_1 \times Y_2 \times B, z^* \in Y^*_4(\lambda),\; \lambda \in A^*$
and hence,
\begin{eqnarray} &&\sup_{z^* \in Y^*_4(\lambda)}\{-\tilde{F}^*(z^*)+G_1^*(\lambda,z^*,t)+G_2^*(\lambda) \}\nonumber \\ &\geq&
 \sup_{ z^* \in Y^*_4(\lambda)}\{-\tilde{F}^*(z^*)+ \langle z_i^*, \nabla m_i \rangle_{L^2}\}
-G_1(m,t)-G_2(f)\nonumber \\ &&-Ind_0(m)-Ind_1(m,f)-Ind_2(f),\end{eqnarray}
$\forall (m,f,t) \in Y_1 \times Y_2 \times B, \; \lambda \in A^*$
so that
that is,
\begin{eqnarray} &&\sup_{z^* \in Y^*_4(\lambda)}\{-\tilde{F}^*(z^*)+G_1^*(\lambda,z^*,t)+G_2^*(\lambda)\} \nonumber \\ &\geq&
 -G_0(m)+\frac{K}{2} \langle m_i,m_i \rangle_{L^2}
-G_1(m,t)-G_2(f)\nonumber \\ &&-Ind_0(m)-Ind_1(m,f)-Ind_2(f)\nonumber \\ &=& -J(m,f,t),\end{eqnarray}
$\forall (m,f,t) \in Y_1 \times Y_2 \times B, \; \lambda \in A^*.$
Thus,
\begin{eqnarray}J(m,f,t) &\geq& \inf_{ z^* \in Y^*_4(\lambda)}\{ \tilde{F}^*(z^*)-G_1^*(\lambda,z^*,t)+G_2^*(\lambda)\}
\nonumber \\ &=& \inf_{z^* \in Y^*_4(\lambda)} J^*(\lambda,z^*,t), \forall (m,f,t) \in Y_1 \times Y_1 \times B, \lambda \in A^*.
\end{eqnarray}
Therefore,
\begin{eqnarray}\label{t1}\inf_{(m,f,t) \in Y_1 \times Y_2 \times B} J(m,f,t) &\geq& \sup_{ \lambda \in A^*}\left\{\inf_{(z^*,t) \in Y^*_4(\lambda)\times B} J^*(\lambda,z^*,t)\right\} \nonumber \\ &=& \sup_{ \lambda \in A^*} \tilde{J}^*(\lambda).
\end{eqnarray}
By the hypotheses, $(\lambda_0,z_0^*,t_0) \in A^* \times Y^*_4(\lambda_0) \times B$ is such that
$$\delta \{J^*(\lambda_0,z_0^*,t_0)+ \langle \lambda_4,t_0^2-t_0 \rangle_{L^2}\}=\mathbf{0}.$$

From the variation in $z^*$,
\begin{equation}\label{t2}\frac{\partial \tilde{F}^*(z_0^*)}{\partial z_i^*}-\nabla (m_0)_i=0, \text{ in } \Omega,\end{equation}
where
\begin{equation}\label{t3}(m_0)_i=\frac{-\frac{\partial (\lambda_0)_2}{\partial x_i}+H_i+\beta(1+2t_0)e_i-div[(z_0^*)_i]}{(\lambda_0)_3+K}.\end{equation}

From the variation in $\lambda_2$,

\begin{equation}\label{t4}div(m_0 \chi_\Omega-f_0)=0, \text{ in } \mathbb{R}^3,\end{equation}
where,
\begin{equation}\label{t5}f_0=curl (\lambda_1)_0-\nabla (\lambda_0)_2.\end{equation}

From the variation in $\lambda_1$, we obtain,
\begin{equation}\label{t6} curl f_0=\mathbf{0}, \text{ in } \mathbb{R}^3.\end{equation}

From (\ref{t2}), we also have,
\begin{equation}\label{t7}\tilde{F}^*(z_0^*)=\langle (z_0^*)_i, \nabla(m_0)_i \rangle_{L^2}+G_0(m_0)-\frac{K}{2}\langle (m_0)_i,(m_0)_i \rangle_{L^2}.\end{equation}
By (\ref{t3}), (\ref{t4}), (\ref{t5}) and (\ref{t6}), we get
\begin{eqnarray}\label{t8} &&G_1^*(\lambda_0,z^*_0,t_0)+G_2^*(\lambda_0) \nonumber \\ &=&
\langle (z_0^*)_i,  \nabla (m_0)_i \rangle_{L^2}
+ \langle \lambda_2, div(m_0 \chi_\Omega)-f_0) \rangle_{L^2} \nonumber \\
&& +\langle (\lambda_0)_1, curl f_0 \rangle_{L^2} \nonumber \\ &&+\left\langle (\lambda_0)_3, \sum_{i=1}^3 (m_0)_i^2-1 \right\rangle_{L^2}
-G_1(m_0,t_0)-G_2(f_0) \nonumber \\ &=& \langle (z_0^*)_i, \nabla  (m_0)_i \rangle_{L^2}
-G_1(m_0,t_0)-G_2(f_0)\nonumber \\ &&-Ind_0(m_0)-Ind_1(m_0,f_0)-Ind_2(f_0),\end{eqnarray}
From (\ref{t7}) and (\ref{t8}), we obtain,
\begin{eqnarray}\label{t9}
&&J^*(\lambda_0,z_0^*,t_0) \nonumber \\ &=&
\tilde{F}^*(z_0^*)-G_1^*(\lambda_0,z_0^*,t_0)-G_2^*(\lambda_0) \nonumber \\ &=&
G_0(m_0)+G_1(m_0,t_0)+G_2(f_0) \nonumber \\ &&+
Ind_0(m_0)+Ind_1(m_0,f_0)+Ind_2(f_0) \nonumber \\ &=& J(m_0,f_0,t_0).
\end{eqnarray}

Finally, from the hypotheses $$\delta^2_{z^*z^*} J^*(\lambda_0,z_0^*,t_0)>\mathbf{0}$$ and $$\det \left\{\delta_{z^*,t}^2\left[J^*(\lambda_0,z_0^*,t_0)+\langle \lambda_4, t_0^2-t_0 \rangle_{L^2}\right]\right\}>0, \text{ in } \Omega.$$
Since the optimization in question in $(z^*,t)$ is quadratic, we obtain,
$$\tilde{J}^*(\lambda_0)=J^*(\lambda_0,z_0^*,t_0).$$

From this, and (\ref{t1}) and (\ref{t9}), we have,
\begin{eqnarray}
J(m_0,f_0,t_0) &=&\inf_{(m,f,t) \in Y \times Y_1 \times B} J(m,f,t)
\nonumber \\ &=& \sup_{\lambda \in A^* } \tilde{J}^*(\lambda) \nonumber \\ &=& \tilde{J}^*(\lambda_0)
\nonumber \\ &=& J^*(\lambda_0,z_0^*,t_0).
\end{eqnarray}
The proof is complete.
\end{proof}

\end{document}